\newcommand{\SELF}[1]{\todo[color=green!40]{#1}} % note to myself
\newcommand{\OMIT}[1]{\todo[color=gray!30]{#1}}  % omitted details of proofs
\newtheorem{theorem}{Theorem} %[section]
\newtheorem{proposition}[theorem]{Proposition}
\newtheorem*{proposition*}{Proposition}
\theoremstyle{definition}
\newtheorem*{example*}{Example}
\newtheoremstyle{named}{}{}{\itshape}{}{\bfseries}{.}{.5em}{\thmnote{#3}}
\theoremstyle{named}
\def\N			{\mathds{N}}
\def\R			{\mathds{R}}
\def\C			{\mathds{C}}
\def\F			{\mathds{F}}
\def\P			{\mathds{P}}
\def\II			{\mathcal{I}}		% increasing multi-indices set
\def\PP			{\mathcal{P}}
\def\ii			{\mathbf{i}}				
\def\jj			{\mathbf{j}}				
\def\kk			{\mathbf{k}}				
\def\ll			{\mathbf{l}}				
\def\qq			{\mathbf{q}}
\def\Gr			{\mathrm{Gr}}
\newcommand\pairs[2] 		{|#1>#2|}					% (i,j) conta pares j<i
\begin{document}

\title{Enumeration of the New Plücker-like Equations}

\author{Andr\'e L. G. Mandolesi\,\orcidlink{0000-0002-5329-7034}
               \thanks{Instituto de Matemática e Estatística, Universidade Federal da Bahia, Av. Milton Santos s/n, 40170-110, Salvador - BA, Brazil. E-mail: \texttt{andre.mandolesi@ufba.br}}}
               
\date{\today}

\maketitle

\abstract{
We give a more detailed description of the new system of Plücker-like equations from \cite{Mandolesi_Contractions2}, 
discuss how it relates to the usual Plücker equations,
and correct a mistake in that article.

\vspace{.5em}
\noindent
{\bf Keywords:} Plücker equations, Plücker relations.

\vspace{3pt}

\noindent
{\bf MSC:} 	
14M15,	%Grassmannians, Schubert varieties, flag manifolds
15A75,	%Exterior algebra, Grassmann algebras
15A66   %Clifford algebras, spinors
}

\

The usual system of Plücker equations \cite{Gallier2020,Jacobson1996} determines the decomposability or simplicity of a $p$-vector $H \in \bigwedge^p \F^n$ (for $\F = \R$ or $\C$),
and describes the Plücker embedding of the Grassmannian $\Gr(p,n)$ of $p$ subspaces of $\F^n$ as an algebraic variety in the projective space $\P(\bigwedge^p \F^n)$.
The system is notoriously redundant,
as its $\binom{n}{p-1} \cdot \binom{n}{p+1}$ quadratic equations include many trivial or repeated ones, and can be much more than the minimum of $\Delta_{n,p} = \dim \P(\bigwedge^p \F^n) - \dim \Gr(n,p) = \binom{n}{p} - 1 - p(n-p)$ equations needed to describe the embedding. 

Based on results from \cite{Eastwood2000}, we presented in \cite{Mandolesi_Contractions2} an equivalent system of Plücker-like equations,
with considerably less equations, though still more than $\Delta_{n,p}$.
Here we describe it in more detail, and prove that, contrary to what we had claimed, it has no trivial or repeated equations.
We hope it can be a step towards obtaining a minimal system of equations.

With the notation of \Cref{tab:symbols},
the usual system of Plücker equations,
for an orthonormal basis $(v_1,\ldots,v_n)$ of $\F^n$
and a $p$-vector $H = \sum_{\ii\in\II^n_p} \lambda_\ii v_\ii$
with $\lambda_\ii \in \F$,
can be written as \cite[Eq.\,(3)]{Mandolesi_Contractions2}:
\begin{equation}\label{eq:Plucker}
	\sum_{i\in\kk \backslash \jj} (-1)^{\pairs{\jj\triangle\kk}{i}} \lambda_{\jj\cup i} \lambda_{\kk \backslash i} = 0 \qquad \forall\, \jj\in\II^n_{p-1},\kk\in\II^n_{p+1}.
\end{equation}

\begin{table}[t]
	\scriptsize
	\centering
	\renewcommand{\arraystretch}{1}
	\begin{tabular}{ll}
		\toprule
		Symbol & Description 
		\\
		\cmidrule(lr){1-1} \cmidrule(lr){2-2} 
		$\II^n_p$ & $\{ (i_1,\ldots,i_p)\in\N^p : 1\leq i_1 < \cdots<i_p\leq n \}$
		\\
		$i_1\cdots i_p$ & $(i_1,\ldots,i_p)$
		\\
		$v_\ii$ & $v_{i_1}\wedge\cdots\wedge v_{i_p}$ for $v_1,\ldots,v_n \in \F^n$ and $\ii = i_1\cdots i_p$
		\\
		$|\ii|$ & Number of indices in $\ii$
		\\
		$\pairs{\ii}{\jj}$ & Number of pairs $(i,j)$ with $i\in\ii$, $j\in\jj$ and $i>j$	
		\\
		$\ii \cup \jj$, $\ii \backslash \jj$,  $\ii \triangle \jj$ & Ordered union, difference, symmetric difference of $\ii$ and $\jj$
		\\
		\bottomrule
	\end{tabular}
	\caption{Some notation from \cite{Mandolesi_Contractions2}.}
	\label{tab:symbols}
\end{table}

The new system of
\emph{Plücker-like equations}  
\cite[Thm.\,4.14]{Mandolesi_Contractions2} is similar, but now $\jj$ and $\kk$ have respectively $p-2$ and $p+2$ indices (with $2 \leq p \leq n-2$, and otherwise the system is trivial), and each quadratic term is formed moving 2 indices from $\kk$ to $\jj$:
\begin{equation}\label{eq:neoPlucker}
	\sum_{\ii \in \II_2^n, \ii \subset \kk \backslash \jj} (-1)^{\pairs{\jj\triangle\kk}{\ii}} \lambda_{\jj\cup \ii} \lambda_{\kk \backslash \ii} = 0
	 \qquad \forall\, \jj\in\II^n_{p-2},\kk\in\II^n_{p+2}.
\end{equation}
Its $\binom{n}{p-2} \cdot \binom{n}{p+2}$ equations are 
$\frac{(p + 2) (n - p + 2)}{(p - 1) (n - p - 1)}$
times less than in \eqref{eq:Plucker}.

The code in \Cref{code} (see \Cref{sc:Code and tables}), in the Python%
\footnote{Python Software Foundation, \url{https://www.python.org/}}
programming language,
can be used to generate both systems.
A more complete program, 
%with the option to export them to \LaTeX, 
and tables with the systems for $4 \leq n \leq 9$, are provided as ancillary files for this article.

A first relation between the two systems is given by:

\begin{proposition}
	Each Plücker-like equation is a sum/subtraction of usual Plücker equations.
\end{proposition}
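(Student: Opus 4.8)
The plan is to realize each two-index move in \eqref{eq:neoPlucker} as a pair of single-index moves of the type \eqref{eq:Plucker}. Fix $\jj\in\II^n_{p-2}$ and $\kk\in\II^n_{p+2}$, and for every index $a\in\kk\backslash\jj$ set $\jj_a:=\jj\cup a\in\II^n_{p-1}$ and $\kk_a:=\kk\backslash a\in\II^n_{p+1}$, with their associated usual Pl\"ucker equation \eqref{eq:Plucker}. The first step is the elementary sign identity $\jj_a\triangle\kk_a=\jj\triangle\kk$: since $a\in\kk\backslash\jj$, moving it from $\kk$ into $\jj$ keeps it in exactly one of the two index sets and alters no other index, so the symmetric difference is unchanged.

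With this in hand I would expand the equation for $(\jj_a,\kk_a)$. Its summand for an index $i\in\kk_a\backslash\jj_a=(\kk\backslash\jj)\backslash a$ is $(-1)^{\pairs{\jj\triangle\kk}{i}}\lambda_{\jj\cup\{a,i\}}\lambda_{\kk\backslash\{a,i\}}$, where the sign uses the identity above. Comparing this with the Pl\"ucker-like summand of \eqref{eq:neoPlucker} for the pair $\ii=\{a,i\}$, and using $\pairs{\jj\triangle\kk}{\ii}=\pairs{\jj\triangle\kk}{a}+\pairs{\jj\triangle\kk}{i}$, the two summands agree up to the single factor $\epsilon_a:=(-1)^{\pairs{\jj\triangle\kk}{a}}$, which is independent of $i$. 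Hence $\epsilon_a$ times the usual equation for $(\jj_a,\kk_a)$ is precisely the partial sum of \eqref{eq:neoPlucker} over the pairs containing $a$.

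Summing $\epsilon_a$ times the equation for $(\jj_a,\kk_a)$ over all $a\in\kk\backslash\jj$ then reassembles \eqref{eq:neoPlucker}: each unordered pair $\{a,b\}\subset\kk\backslash\jj$ is produced twice, once from $a$ and once from $b$, and on both occasions with the common sign $(-1)^{\pairs{\jj\triangle\kk}{a}+\pairs{\jj\triangle\kk}{b}}$. This signed sum of usual Pl\"ucker equations therefore equals twice the Pl\"ucker-like equation, exhibiting each Pl\"ucker-like equation as a signed sum of usual ones.

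The sign bookkeeping is what must be handled with care, so I would establish $\jj_a\triangle\kk_a=\jj\triangle\kk$ and the additive splitting of $\pairs{\jj\triangle\kk}{\ii}$ before anything else. The genuine obstacle is the overall factor of $2$: a short linear-algebra check shows that, if one uses only the equations attached to the sets $(\jj_a,\kk_a)$, this factor is forced whenever $|\kk\backslash\jj|\ge 3$. To read the result as a sum and subtraction with unit coefficients, one either interprets the identity projectively, both sides cutting out the same locus, or invokes the redundancy of \eqref{eq:Plucker} to split the doubled equation into two distinct but equal usual equations; the boundary case $|\kk\backslash\jj|=2$ is immediate, as a single usual equation then already reproduces the Pl\"ucker-like one.
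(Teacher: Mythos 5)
Your proof is correct and is essentially the paper's own argument: both decompose the (doubled) Pl\"ucker-like equation as $\sum_{a\in\kk\backslash\jj}(-1)^{\pairs{\jj\triangle\kk}{a}}\,\PP(\jj_a,\kk_a)$, resting on the identity $\jj_a\triangle\kk_a=\jj\triangle\kk$ and the additivity $\pairs{\jj\triangle\kk}{\ii}=\pairs{\jj\triangle\kk}{a}+\pairs{\jj\triangle\kk}{i}$, with the paper absorbing your factor of $2$ simply by observing that summing over ordered rather than increasing pairs repeats each term twice, which over $\F=\R$ or $\C$ leaves the equation unchanged. One peripheral slip: since $|\jj|=p-2$ and $|\kk|=p+2$ force $|\kk\backslash\jj|\geq 4$, your ``boundary case $|\kk\backslash\jj|=2$'' never occurs.
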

\begin{proof}
	In \eqref{eq:neoPlucker}, summing over $\ii = (i,i')$ with $i \neq i'$ instead of $i < i'$ (but keeping $\jj\cup \ii$ ordered) we just obtain the same terms twice.
	So the equation can be written as follows, where $\PP(\jj,\kk)$ is the expression at the left-hand side of \eqref{eq:Plucker},
	$\jj_{i} = \jj\cup i \in\II^n_{p-1}$ and $\kk_{i} = \kk\backslash i \in\II^n_{p+1}$:
	\begin{align*}
		\sum_{i\in \kk\backslash\jj}
		\Big(
		(-1)^{\pairs{\jj\triangle\kk}{i}} \cdot
		\sum_{i'\in\kk_{i} \backslash \jj_{i}} (-1)^{\pairs{\jj_{i}\triangle\kk_{i}}{i'}} \lambda_{\jj_{i}\cup i'} \lambda_{\kk_{i} \backslash i'} 
		\Big) &=\\
		\sum_{i\in \kk\backslash\jj}
		(-1)^{\pairs{\jj\triangle\kk}{i}} \cdot
		\PP(\jj_i,\kk_i) &= 0. \qedhere
	\end{align*}
\end{proof}

This proves \eqref{eq:Plucker} $\Rightarrow$ \eqref{eq:neoPlucker} directly.
The converse holds (as both systems are equivalent to $H$ being simple), but seems less immediate.
As we will see, in some cases a sum/subtraction of equations of \eqref{eq:neoPlucker} gives one of \eqref{eq:Plucker}, 
but it is unclear whether this is generally true.
As each Plücker-like equation combines $|\kk\backslash\jj| \geq 4$ Plücker equations,
it tends to have more terms, but many cancel out and at times a single Plücker equation is left.

\begin{example*}
For $(n,p) = (6,3)$, \Cref{tb:old} (in \Cref{sc:Code and tables}) has some of the 225 equations of \eqref{eq:Plucker}.
Removing trivial or repeated ones (e.g., equations \# 7 and 19)
leaves the 45 equations  of \Cref{tb:old_reduced}, still more than $\Delta_{6,3} = 10$.
\Cref{tb:new} has all $36$ equations of \eqref{eq:neoPlucker}.
Note, for example, that its equation \# 6 results from the sum of equations \# 15, 29 and 57, minus 43 and 71, from \Cref{tb:old}. 
Also, there are no trivial or repeated equations, and it has:
\begin{itemize}
	\item the same 30 equations of 3 terms as in \Cref{tb:old_reduced}, which however appear 4 times each in \eqref{eq:Plucker}, and only once in \eqref{eq:neoPlucker};
	
	\item 6 equations with the same 10 terms, differing by some signs.
	Adding or subtracting each pair we obtain one of $\binom{6}{2} = 15$ usual Plücker equations of 4 terms:
	e.g., adding equations \# 6 and 11 of \Cref{tb:new} we find \# 12 of \Cref{tb:old_reduced}.
\end{itemize}
This reveals a mistake in \cite{Mandolesi_Contractions2}: 
without realizing that some signs would differ,
we had claimed that \eqref{eq:neoPlucker} would give 6 equivalent equations when $|\jj \cap \kk| = p-3$, 
and that the last equation%
	\footnote{It also has a misprint: the first term should be $\lambda_{123}\lambda_{456}$, not $\lambda_{123}\lambda_{146}$.}
of \cite[Ex.\,4.18]{Mandolesi_Contractions2} resulted from $(\jj,\kk) = (1,23456)$, $(2,13456)$, $\ldots$ , or $(6,12345)$, which is false. 
\end{example*}

The next Proposition enumerates the Plücker-like equations in terms of the number of distinct terms, 
proves that there are no trivial or repeated ones,
and generalizes the relations with Plücker equations seen in the example.
It is unknown whether similar relations hold in case (\ref{it:set3}).

\begin{proposition}
	For $2 \leq p \leq n-2$, the Plücker-like equations \eqref{eq:neoPlucker} are all distinct and non-trivial. The system consists of
		\footnote{Recall that the multinomial $\binom{n}{n_1,\ldots,n_k} = \frac{n!}{n_1! \cdots  n_k!}$ is the number of ways to partition $n = n_1+\cdots+n_k$ elements into $k$ subsets of $n_1,\ldots,n_k$ elements.}:
	\begin{enumerate}[(i)]
		\item\label{it:set1}
		$\binom{n}{4, p-2, n-p-2}$ usual Plücker equations of 3 terms.
		
		\item\label{it:set2}
		$\binom{n}{1,5, p-3, n-p-3}$ equations of 10 terms, if $3 \leq p \leq n-3$.
		They are partitioned into $\binom{n}{6, p-3, n-p-3}$ sets of 6 equations $\{E_1,\ldots,E_6\}$ with the same terms, such that, for $1 \leq i < i' \leq 6$, $E_i + (-1)^{i+i'} E_{i'}$ gives one of a subset of 15 usual Plücker equations of 4 terms.
		
		\item\label{it:set3}
		$\binom{n}{q, p-2-q,p+2-q, n+q-2p}$ equations of $\binom{p+2-q}{2} \geq 15$ 
			\SELF{$\leq \binom{n-p+2}{2}$}
		terms,
		for each integer $q \in [\max\{0,2p-n\} , p-4]$.
		\SELF{Implies $4 \leq p \leq n-4$}
	\end{enumerate}	
\end{proposition}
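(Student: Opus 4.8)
The plan is to organize everything by the overlap $q=|\jj\cap\kk|$. Writing $S=\jj\cap\kk$ and $D=\jj\triangle\kk$, and splitting $D$ into $\jj\backslash\kk$ (with $p-2-q$ indices) and $\kk\backslash\jj$ (with $p+2-q$ indices), each equation \eqref{eq:neoPlucker} is a sum over the $2$-subsets $\ii\subset\kk\backslash\jj$, so it has $\binom{p+2-q}{2}$ terms before any cancellation, and $\jj\cup\kk$ uses $2p-q$ indices. Requiring $q\le|\jj|=p-2$ and $2p-q\le n$ pins the range $q\in[\max\{0,2p-n\},\,p-2]$. Choosing $S$, $\jj\backslash\kk$, $\kk\backslash\jj$ and the unused indices independently gives $\binom{n}{q,\,p-2-q,\,p+2-q,\,n+q-2p}$ pairs with a given $q$, exactly the multinomial in (\ref{it:set3}); the values $q=p-2$, $q=p-3$ and $q\le p-4$ reproduce the counts in (\ref{it:set1}), (\ref{it:set2}) and (\ref{it:set3}) after collapsing the zero or small entries. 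Thus the enumeration reduces to analysing one $q$ at a time.

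Next I would settle which terms coincide. An elementary check shows that $\lambda_{\jj\cup\ii}\lambda_{\kk\backslash\ii}=\lambda_{\jj\cup\ii'}\lambda_{\kk\backslash\ii'}$ forces $\ii=\ii'$ unless $\jj\subset\kk$ (i.e.\ $q=p-2$), in which case it happens precisely when $\ii,\ii'$ are complementary $2$-subsets of the $4$-set $\kk\backslash\jj$. Hence for $q\le p-3$ all $\binom{p+2-q}{2}$ products are distinct, each has coefficient $\pm1$, and the equation is automatically non-trivial; this covers (\ref{it:set2}) ($10$ terms) and (\ref{it:set3}) ($\binom{p+2-q}{2}\ge15$ terms). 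For (\ref{it:set1}) I would use the two identities $\pairs{W}{\ii_1\cup\ii_2}=\pairs{W}{\ii_1}+\pairs{W}{\ii_2}$ (disjoint $\ii_1,\ii_2$) and $\pairs{W}{W}=\binom{|W|}{2}$: with $W=\kk\backslash\jj$ of size $4$, complementary $2$-subsets satisfy $\pairs{W}{\ii}+\pairs{W}{\ii'}=\binom{4}{2}=6$, so they share the same sign and reinforce. Writing $\kk\backslash\jj=\{a_1<a_2<a_3<a_4\}$ and noting $S=\jj$, the equation collapses to $\pm2\big(\lambda_{\jj\cup a_1a_2}\lambda_{\jj\cup a_3a_4}-\lambda_{\jj\cup a_1a_3}\lambda_{\jj\cup a_2a_4}+\lambda_{\jj\cup a_1a_4}\lambda_{\jj\cup a_2a_3}\big)$, which is the usual Plücker equation $\PP(\jj\cup a_1,\,\jj\cup a_2a_3a_4)$; non-triviality and the ``$3$ terms'' claim follow.

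For (\ref{it:set2}) I would fix $S$ and the $6$-set $T=(\jj\cup\kk)\backslash S=\{t_1<\cdots<t_6\}$, and index the six pairs sharing this $S,T$ by the position $m$ of the single element $d=\jj\backslash\kk$, giving $E_1,\dots,E_6$. Each product is $\lambda_{S\cup A}\lambda_{S\cup B}$ for a partition $T=A\sqcup B$ into two $3$-sets, and in $E_m$ the coefficient of this product, with $A$ the part containing $t_m$, equals $(-1)^{\pairs{T}{A\backslash t_m}}$. Using the same additivity together with $\pairs{T}{T}=\binom{6}{2}=15$, I would verify that in $E_i+(-1)^{i+i'}E_{i'}$ the two contributions to a product reinforce when $t_i,t_{i'}$ lie in the same part and cancel when they are split. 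Consequently the combination retains exactly the four partitions with $t_i,t_{i'}$ together and kills the other six, yielding the $4$-term Plücker equation $\PP(S\cup t_it_{i'},\,S\cup(T\backslash t_it_{i'}))$; letting $\{t_i,t_{i'}\}$ range over $T$ produces the stated $\binom{6}{2}=15$ equations.

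Finally, distinctness. From the products of any equation one recovers $S$ and $\jj\cup\kk$ as the intersection and union of the two factors of a single product, hence $D$ and $r:=|\jj\backslash\kk|=p-2-q$. The separating invariant is the number of products in which a fixed pair of indices of $D$ lies in a common factor: a direct count gives $\binom{r+4}{2}$ for pairs inside $\jj\backslash\kk$, $r+3$ for mixed pairs, and $1+\binom{r+2}{2}$ for pairs inside $\kk\backslash\jj$. For $r\ge2$ these three values are distinct (the first is strictly largest), so $\jj\backslash\kk$, and thus $(\jj,\kk)$, is determined by the products; this proves distinctness in (\ref{it:set1}) (trivially, $r=0$) and (\ref{it:set3}). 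Exactly at $r=1$ the mixed and inside-$\kk\backslash\jj$ counts coincide, which is why the six equations of (\ref{it:set2}) share their products; there I would instead invoke the previous paragraph, since $E_i+(-1)^{i+i'}E_{i'}$ being a nonzero $4$-term equation forces $E_i\ne\pm E_{i'}$. Equations from different $q$ are separated by their differing numbers of distinct products ($3$, $10$, or $\ge15$). I expect the sign bookkeeping of the $(\ref{it:set2})$ analysis to be the main obstacle: tracking $(-1)^{\pairs{T}{\cdot}}$ consistently as each unordered product is read off from two distinct equations $E_i,E_{i'}$ is exactly where sign errors arise — indeed it is precisely the point mishandled in \cite{Mandolesi_Contractions2}.
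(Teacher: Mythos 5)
Your proposal is correct, and its architecture is essentially the paper's: both classify the equations by $q=|\jj\cap\kk|$, obtain the multinomial counts from the independent choices of $\jj\cap\kk$, $\jj\backslash\kk$, $\kk\backslash\jj$ and the unused indices, and prove (\ref{it:set1}) and (\ref{it:set2}) by the same cancellation computations. Your sign bookkeeping via additivity of $\pairs{T}{\cdot}$ together with the parities $\pairs{W}{W}=6$ and $\pairs{T}{T}=15$ is a coordinate-free counterpart of the positional signs in the paper's rewriting \eqref{eq:neoPlucker2} (namely $(-1)^{a+b}$ for \eqref{eq:neoPlucker3}, and $(-1)^{c+d}=(-1)^{1+i+i'+c'+d'}$ for the cancellation); I checked your reinforce/cancel claim and it holds, leaving exactly the four partitions with $t_i,t_{i'}$ in the same part, i.e.\ \eqref{eq:old4} up to the overall factor $2(-1)^{i'}$. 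The one genuinely different ingredient is your distinctness argument: the paper recovers $(\jj,\kk)$ qualitatively ($\qq$ as the indices present in every $\lambda$, and $\jj'$ as the $p-2-q\geq 2$ remaining indices lying in the same $\lambda$ in every term), whereas you use the quantitative invariant counting, for each pair of indices of $\jj\triangle\kk$, the number of terms in which the pair shares a factor, with values $\binom{r+4}{2}$, $r+3$, $1+\binom{r+2}{2}$ for $r=p-2-q$ (all three counts are correct). This costs a small computation but buys something the paper leaves implicit: the coincidence $r+3=1+\binom{r+2}{2}$ exactly at $r=1$ explains structurally why case (\ref{it:set2}) is the unique place where distinct pairs $(\jj,\kk)$ produce identical sets of products, while for $r\geq 2$ the strict inequalities identify $\jj\backslash\kk$, hence $(\jj,\kk)$, at a glance. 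Your fallback at $r=1$ is also sound once phrased as your own computation supports it: since $E_i+(-1)^{i+i'}E_{i'}$ has exactly four nonzero terms, it can be neither $0$ nor $2E_i$ (which would have ten terms), so $E_i\neq\pm E_{i'}$ --- the same inference the paper makes when it asserts $E_i\neq E_{i'}$.
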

\begin{proof}
	We can write \eqref{eq:neoPlucker}, with
	$\qq = \jj \cap \kk$, 
	$q = |\qq| \in [\max\{0,2p-n\},p-2]$,
		\SELF{$\leq n-4$}
	$\jj' = \jj\backslash\kk = (j'_1,\ldots,j'_{p-2-q})$
	and
	$\kk' = \kk\backslash\jj = (k'_1,\ldots,k'_{p+2-q})$,
	as
	\begin{align}
		\sum_{1\leq a < b \leq p+2-q} (-1)^{|\jj' > k'_a k'_b| + |\kk' > k'_a k'_b|} \lambda_{\jj\cup (k'_a k'_b)} \lambda_{\kk \backslash (k'_a k'_b)} &= \nonumber\\
		\sum_{1\leq a < b \leq p+2-q} (-1)^{|k'_a < \jj' < k'_b| + a + b} \lambda_{\qq \cup \jj' \cup (k'_a k'_b)} \lambda_{\qq \cup \kk' \backslash (k'_a k'_b)}
		&=0, \label{eq:neoPlucker2}
	\end{align}
	where $|k'_a < \jj' < k'_b|$ is the number of indices $j' \in \jj'$ with $k'_a < j' < k'_b$,
	and we used 
	$(-1)^{|\jj' > k'_a k'_b|} = (-1)^{|\jj' > k'_a| + |\jj' > k'_b|} = (-1)^{|k'_a < \jj' < k'_b| + 2|\jj' > k'_b|}$
		and
	$(-1)^{|\kk' > k'_a k'_b|} = (-1)^{(p+2-q-a)+(p+2-q-b)} = (-1)^{a+b}$.
	The following characterization will show the equations are distinct and non-trivial.
	
	(\ref{it:set1})
	If $q=p-2$ then $\jj' = \emptyset$ and $\kk' = (k'_1,\ldots,k'_4)$, so \eqref{eq:neoPlucker2} becomes
	\begin{equation*}
		\sum_{1\leq a < b \leq 4} (-1)^{a+b} \lambda_{\qq\cup (k'_a k'_b)} \lambda_{\qq \cup (k'_c k'_d)} = 0,
	\end{equation*}
	where $(c,d) = (1,2,3,4)\backslash (a,b)$.
	Each term appears twice and with the same sign, as $(-1)^{c+d} = (-1)^{a+b}$.
	Simplifying we find a Plücker equation%
	\footnote{Obtained 4 times in \eqref{eq:Plucker}, for $(\jj,\kk) = (\qq \cup k'_a,\qq\cup\kk'\backslash k'_a)$ with $a=1,\ldots,4$.},
	\begin{equation}\label{eq:neoPlucker3}
		\lambda_{\qq \cup (k'_1 k'_2)} \lambda_{\qq \cup (k'_3 k'_4)} -
		\lambda_{\qq \cup (k'_1 k'_3)} \lambda_{\qq \cup (k'_2 k'_4)} +
		\lambda_{\qq \cup (k'_1 k'_4)} \lambda_{\qq \cup (k'_2 k'_3)} = 0.
	\end{equation}
	The choices of $p-2$ indices for $\qq$ and other $4$ distinct ones for $\kk'$ yield  $\binom{n}{4, p-2, n-p-2}$ such equations.
	They are all distinct, as in \eqref{eq:neoPlucker3} we can determine $\qq$ (indices present in all $\lambda$'s) and $\kk'$ (remaining ones).
	
	(\ref{it:set2})
	If $q = p-3$ then $\jj' = (j')$, $\kk' = (k'_1,\ldots,k'_5)$,	
	and \eqref{eq:neoPlucker2} has $\binom{5}{2} = 10$ distinct terms:%
	\footnote{Note that $|k'_a < j' < k'_b|$ is 1 if the inequality holds, 0 otherwise.}
	\begin{equation}\label{eq:neoPLucker10}
		\sum_{1\leq a < b \leq 5} (-1)^{|k'_a < j' < k'_b| + a+b} \lambda_{\qq\cup (j'k'_a k'_b)} \lambda_{\qq \cup \kk' \backslash (k'_a k'_b)} = 0.
	\end{equation}
	Each of the $\binom{n}{6, p-3, n-p-3}$ choices for $\qq$ and
	$\ll = \jj' \cup \kk' = (l_1,\ldots,l_6)$ gives a set of 6 equations $\{E_1,\ldots,E_6\}$ with the same terms,
	where $E_i$ has $j' = l_i$ and $\kk' = \ll \backslash l_i$ (and so $k'_a = l_c$ with $a = c - |c>i|$).
	The sets are disjoint, as \eqref{eq:neoPLucker10} determines $\qq$ (indices in all $\lambda$'s) and $\ll$ (remaining ones). 
	
	We can write $E_i$ as follows, where $l_{icd} = (l_i, l_c, l_d)$ for short:
	\begin{align*}
		\sum_{1\leq a < b \leq 5} (-1)^{|k'_a < l_i < k'_b| + a+b} \lambda_{\qq\cup (l_i k'_a k'_b)} \lambda_{\qq \cup \kk' \backslash (k'_a k'_b)} & = \\
		\sum_{\substack{1\leq c < d \leq 6 \\ c,d \neq i}} (-1)^{|c < i < d| + c-|c>i|+d-|d>i|} \lambda_{\qq\cup l_{icd}} \lambda_{\qq \cup \ll \backslash l_{icd}} & = \\
		\sum_{\substack{1\leq c < d \leq 6 \\ c,d \neq i}} (-1)^{c+d} \lambda_{\qq\cup l_{icd}} \lambda_{\qq \cup \ll \backslash l_{icd}} & = 0.
	\end{align*}
	Given $1 \leq i'  \leq 6$, $i' \neq i$, we can split this sum and rewrite $E_i$ as
	\begin{equation*}
		\sum_{\substack{1\leq c < d \leq 6 \\ c,d \neq i,i'}} (-1)^{c+d} \lambda_{\qq\cup l_{icd}} \lambda_{\qq \cup \ll \backslash l_{icd}} 
		+
		\sum_{\substack{1\leq c \leq 6 \\ c \neq i,i'}} (-1)^{c+i'} \lambda_{\qq\cup l_{ii'c}} \lambda_{\qq \cup \ll \backslash l_{ii'c}}
		= 0.
	\end{equation*}
	As $\ll \backslash l_{icd} = l_{i'c'd'}$ for
	$(c',d') = (1,2,3,4,5,6)\backslash (i,i',c,d)$,
	and $(-1)^{c+d} = (-1)^{1+\cdots+6 - i - i' - c' - d'} = (-1)^{1 + i + i' + c' + d'}$,
	taking $E_i + (-1)^{i+i'} E_{i'}$
	the first sum cancels out, leaving a Plücker equation%
		\footnote{Obtained in \eqref{eq:Plucker} for $(\jj,\kk) = (\qq \cup l_{ii'},\qq \cup \ll\backslash l_{ii'})$.}
	of 4 terms,
		\OMIT{after factoring out $2\cdot (-1)^{i'}$}
	\begin{equation}\label{eq:old4}
		\sum_{\substack{1\leq c \leq 6 \\ c \neq i,i'}} (-1)^{c} \lambda_{\qq\cup l_{ii'c}} \lambda_{\qq \cup \ll \backslash l_{ii'c}}
		= 0.
	\end{equation}
	Each $i$ and $i'$ gives one of $\binom{6}{2} = 15$ different Plücker equations, 
	since $l_i$ and $l_{i'}$ can be determined as the indices always appearing together in only one $\lambda$ of each term.
	Also, $E_i \neq E_{i'}$, and so \eqref{eq:neoPlucker} has $6 \cdot \binom{n}{6, p-3, n-p-3} = \binom{n}{1,5, p-3, n-p-3}$ distinct equations of 10 terms.

	(\ref{it:set3})
	For $\max\{0,2p-n\} \leq q \leq p-4$, the number of distinct terms in \eqref{eq:neoPlucker2} is $\binom{p+2-q}{2} \geq \binom{p+2-(p-4)}{2} = 15$, so each $q$ gives a completely different set of equations.
		\OMIT{$\binom{p+2-q}{2}$ decreases with $q$}
	For a given $q$, each of the $\binom{n}{q, p-2-q,p+2-q, n+q-2p}$ choices for $\qq$, $\jj'$ and $\kk'$ yields a distinct equation, as in \eqref{eq:neoPlucker2} we can identify $\qq$ (indices in all $\lambda$'s), $\jj'$ ($p-2-q \geq 2$ remaining indices which in all terms are in the same $\lambda$) and $\kk'$ (the rest).
\end{proof}

\section*{Acknowledgments}

We thank Enzo Passos Silva Ribeiro, a Mathematics student at Universidade Federal da Bahia, for helping us improve the Python code.

%\vspace{6pt}
%
%\noindent
%\textbf{Note.} This article has been posted to the arXiv e-print repository, with the identifier arXiv:..........

%\bibliographystyle{amsplain} 
%\bibliography{../../Bibliografia_Linear_Geometry/Linear_Geometry}

\providecommand{\bysame}{\leavevmode\hbox to3em{\hrulefill}\thinspace}
\providecommand{\MR}{\relax\ifhmode\unskip\space\fi MR }
% \MRhref is called by the amsart/book/proc definition of \MR.
\providecommand{\MRhref}[2]{%
	\href{http://www.ams.org/mathscinet-getitem?mr=#1}{#2}
}
\providecommand{\href}[2]{#2}

\appendix

\section{Code and tables}\label{sc:Code and tables}

\begin{adjustwidth}{-7mm}{-7mm}
\begin{lstlisting}[language = {Python}, frame = single, linewidth=11.8cm, caption = {Python code to generate Plücker or Plücker-like equations}, label = {code}]
from IPython.display import display,Math
from itertools import combinations
from sympy import init_printing,IndexedBase,latex
init_printing()

n = 6  # Dimension of the space (3 < n < 10).
p = 3  # Dimension of the subspaces (1 < p < n-1).
m = 2  # m = 1 for the usual Plucker equations, 
       # m = 2 for the new Plucker-like equations.

lamb = IndexedBase('\\lambda')
indices = list(range(1,n+1))
J_values = list(combinations(indices,p-m))
K_values = list(combinations(indices,p+m))
for J in J_values:
  for K in K_values:
    expr = 0
    K_minus_J = set(K) - set(J)
    J_symdif_K = set(J).symmetric_difference(K)
    for I in combinations(K_minus_J, m):
      sign = (-1)**sum(l > i for l in J_symdif_K for i in I)
      multiindex1 = ''.join(map(str,sorted(set(J).union(I))))
      multiindex2 = ''.join(map(str,sorted(set(K) - set(I))))
      expr += sign*lamb[multiindex1]*lamb[multiindex2]
    J_str = ''.join(map(str,J))
    K_str = ''.join(map(str,K))
    eq = latex(expr) + '=0'
    display(Math(f'({J_str},{K_str}):\\ {eq}'))	
\end{lstlisting}
\end{adjustwidth}

\newpage

\begin{longtable}{rll}
	\caption{Plücker equations for (n,p) = (6,3)} \label{tb:old} \\ 
	\# & $(\jj,\kk)$ & Equation \\
	\hline
	1 & (12,1234) & $0 = 0$ \\
	2 & (12,1235) & $0 = 0$ \\
	3 & (12,1236) & $0 = 0$ \\
	4 & (12,1245) & $0 = 0$ \\
	5 & (12,1246) & $0 = 0$ \\
	6 & (12,1256) & $0 = 0$ \\
	7 & (12,1345) & ${\lambda}_{123} {\lambda}_{145} - {\lambda}_{124} {\lambda}_{135} + {\lambda}_{125} {\lambda}_{134} = 0$ \\
	8 & (12,1346) & ${\lambda}_{123} {\lambda}_{146} - {\lambda}_{124} {\lambda}_{136} + {\lambda}_{126} {\lambda}_{134} = 0$ \\
	9 & (12,1356) & ${\lambda}_{123} {\lambda}_{156} - {\lambda}_{125} {\lambda}_{136} + {\lambda}_{126} {\lambda}_{135} = 0$ \\
	10 & (12,1456) & ${\lambda}_{124} {\lambda}_{156} - {\lambda}_{125} {\lambda}_{146} + {\lambda}_{126} {\lambda}_{145} = 0$ \\
	11 & (12,2345) & ${\lambda}_{123} {\lambda}_{245} - {\lambda}_{124} {\lambda}_{235} + {\lambda}_{125} {\lambda}_{234} = 0$ \\
	12 & (12,2346) & ${\lambda}_{123} {\lambda}_{246} - {\lambda}_{124} {\lambda}_{236} + {\lambda}_{126} {\lambda}_{234} = 0$ \\
	13 & (12,2356) & ${\lambda}_{123} {\lambda}_{256} - {\lambda}_{125} {\lambda}_{236} + {\lambda}_{126} {\lambda}_{235} = 0$ \\
	14 & (12,2456) & ${\lambda}_{124} {\lambda}_{256} - {\lambda}_{125} {\lambda}_{246} + {\lambda}_{126} {\lambda}_{245} = 0$ \\
	15 & (12,3456) & ${\lambda}_{123} {\lambda}_{456} - {\lambda}_{124} {\lambda}_{356} + {\lambda}_{125} {\lambda}_{346} - {\lambda}_{126} {\lambda}_{345} = 0$ \\
	16 & (13,1234) & $0 = 0$ \\
	17 & (13,1235) & $0 = 0$ \\
	18 & (13,1236) & $0 = 0$ \\
	19 & (13,1245) & ${\lambda}_{123} {\lambda}_{145} - {\lambda}_{124} {\lambda}_{135} + {\lambda}_{125} {\lambda}_{134} = 0$ \\
%	20 & (13,1246) & ${\lambda}_{123} {\lambda}_{146} - {\lambda}_{124} {\lambda}_{136} + {\lambda}_{126} {\lambda}_{134} = 0$ \\
%	21 & (13,1256) & ${\lambda}_{123} {\lambda}_{156} - {\lambda}_{125} {\lambda}_{136} + {\lambda}_{126} {\lambda}_{135} = 0$ \\
%	22 & (13,1345) & $0 = 0$ \\
%	23 & (13,1346) & $0 = 0$ \\
%	24 & (13,1356) & $0 = 0$ \\
%	25 & (13,1456) & ${\lambda}_{134} {\lambda}_{156} - {\lambda}_{135} {\lambda}_{146} + {\lambda}_{136} {\lambda}_{145} = 0$ \\
%	26 & (13,2345) & ${\lambda}_{123} {\lambda}_{345} - {\lambda}_{134} {\lambda}_{235} + {\lambda}_{135} {\lambda}_{234} = 0$ \\
%	27 & (13,2346) & ${\lambda}_{123} {\lambda}_{346} - {\lambda}_{134} {\lambda}_{236} + {\lambda}_{136} {\lambda}_{234} = 0$ \\
%	28 & (13,2356) & ${\lambda}_{123} {\lambda}_{356} - {\lambda}_{135} {\lambda}_{236} + {\lambda}_{136} {\lambda}_{235} = 0$ \\
	\vdots \\
	29 & (13,2456) & ${\lambda}_{123} {\lambda}_{456} + {\lambda}_{134} {\lambda}_{256} - {\lambda}_{135} {\lambda}_{246} + {\lambda}_{136} {\lambda}_{245} = 0$ \\
%	30 & (13,3456) & ${\lambda}_{134} {\lambda}_{356} - {\lambda}_{135} {\lambda}_{346} + {\lambda}_{136} {\lambda}_{345} = 0$ \\
%	31 & (14,1234) & $0 = 0$ \\
%	32 & (14,1235) & ${\lambda}_{123} {\lambda}_{145} - {\lambda}_{124} {\lambda}_{135} + {\lambda}_{125} {\lambda}_{134} = 0$ \\
%	33 & (14,1236) & ${\lambda}_{123} {\lambda}_{146} - {\lambda}_{124} {\lambda}_{136} + {\lambda}_{126} {\lambda}_{134} = 0$ \\
%	34 & (14,1245) & $0 = 0$ \\
%	35 & (14,1246) & $0 = 0$ \\
%	36 & (14,1256) & ${\lambda}_{124} {\lambda}_{156} - {\lambda}_{125} {\lambda}_{146} + {\lambda}_{126} {\lambda}_{145} = 0$ \\
%	37 & (14,1345) & $0 = 0$ \\
%	38 & (14,1346) & $0 = 0$ \\
%	39 & (14,1356) & ${\lambda}_{134} {\lambda}_{156} - {\lambda}_{135} {\lambda}_{146} + {\lambda}_{136} {\lambda}_{145} = 0$ \\
%	40 & (14,1456) & $0 = 0$ \\
%	41 & (14,2345) & ${\lambda}_{124} {\lambda}_{345} - {\lambda}_{134} {\lambda}_{245} + {\lambda}_{145} {\lambda}_{234} = 0$ \\
%	42 & (14,2346) & ${\lambda}_{124} {\lambda}_{346} - {\lambda}_{134} {\lambda}_{246} + {\lambda}_{146} {\lambda}_{234} = 0$ \\
	\vdots \\
	43 & (14,2356) & ${\lambda}_{124} {\lambda}_{356} - {\lambda}_{134} {\lambda}_{256} - {\lambda}_{145} {\lambda}_{236} + {\lambda}_{146} {\lambda}_{235} = 0$ \\
%	44 & (14,2456) & ${\lambda}_{124} {\lambda}_{456} - {\lambda}_{145} {\lambda}_{246} + {\lambda}_{146} {\lambda}_{245} = 0$ \\
%	45 & (14,3456) & ${\lambda}_{134} {\lambda}_{456} - {\lambda}_{145} {\lambda}_{346} + {\lambda}_{146} {\lambda}_{345} = 0$ \\
%	46 & (15,1234) & ${\lambda}_{123} {\lambda}_{145} - {\lambda}_{124} {\lambda}_{135} + {\lambda}_{125} {\lambda}_{134} = 0$ \\
%	47 & (15,1235) & $0 = 0$ \\
%	48 & (15,1236) & ${\lambda}_{123} {\lambda}_{156} - {\lambda}_{125} {\lambda}_{136} + {\lambda}_{126} {\lambda}_{135} = 0$ \\
%	49 & (15,1245) & $0 = 0$ \\
%	50 & (15,1246) & ${\lambda}_{124} {\lambda}_{156} - {\lambda}_{125} {\lambda}_{146} + {\lambda}_{126} {\lambda}_{145} = 0$ \\
%	51 & (15,1256) & $0 = 0$ \\
%	52 & (15,1345) & $0 = 0$ \\
%	53 & (15,1346) & ${\lambda}_{134} {\lambda}_{156} - {\lambda}_{135} {\lambda}_{146} + {\lambda}_{136} {\lambda}_{145} = 0$ \\
%	54 & (15,1356) & $0 = 0$ \\
%	55 & (15,1456) & $0 = 0$ \\
%	56 & (15,2345) & ${\lambda}_{125} {\lambda}_{345} - {\lambda}_{135} {\lambda}_{245} + {\lambda}_{145} {\lambda}_{235} = 0$ \\
	\vdots \\
	57 & (15,2346) & ${\lambda}_{125} {\lambda}_{346} - {\lambda}_{135} {\lambda}_{246} + {\lambda}_{145} {\lambda}_{236} + {\lambda}_{156} {\lambda}_{234} = 0$ \\
%	58 & (15,2356) & ${\lambda}_{125} {\lambda}_{356} - {\lambda}_{135} {\lambda}_{256} + {\lambda}_{156} {\lambda}_{235} = 0$ \\
%	59 & (15,2456) & ${\lambda}_{125} {\lambda}_{456} - {\lambda}_{145} {\lambda}_{256} + {\lambda}_{156} {\lambda}_{245} = 0$ \\
%	60 & (15,3456) & ${\lambda}_{135} {\lambda}_{456} - {\lambda}_{145} {\lambda}_{356} + {\lambda}_{156} {\lambda}_{345} = 0$ \\
%	61 & (16,1234) & ${\lambda}_{123} {\lambda}_{146} - {\lambda}_{124} {\lambda}_{136} + {\lambda}_{126} {\lambda}_{134} = 0$ \\
%	62 & (16,1235) & ${\lambda}_{123} {\lambda}_{156} - {\lambda}_{125} {\lambda}_{136} + {\lambda}_{126} {\lambda}_{135} = 0$ \\
%	63 & (16,1236) & $0 = 0$ \\
%	64 & (16,1245) & ${\lambda}_{124} {\lambda}_{156} - {\lambda}_{125} {\lambda}_{146} + {\lambda}_{126} {\lambda}_{145} = 0$ \\
%	65 & (16,1246) & $0 = 0$ \\
%	66 & (16,1256) & $0 = 0$ \\
%	67 & (16,1345) & ${\lambda}_{134} {\lambda}_{156} - {\lambda}_{135} {\lambda}_{146} + {\lambda}_{136} {\lambda}_{145} = 0$ \\
%	68 & (16,1346) & $0 = 0$ \\
%	69 & (16,1356) & $0 = 0$ \\
%	70 & (16,1456) & $0 = 0$ \\
	\vdots \\
	71 & (16,2345) & ${\lambda}_{126} {\lambda}_{345} - {\lambda}_{136} {\lambda}_{245} + {\lambda}_{146} {\lambda}_{235} - {\lambda}_{156} {\lambda}_{234} = 0$ \\
	\vdots \\
	211 & (56,1234) & ${\lambda}_{123} {\lambda}_{456} - {\lambda}_{124} {\lambda}_{356} + {\lambda}_{134} {\lambda}_{256} - {\lambda}_{156} {\lambda}_{234} = 0$ \\
	212 & (56,1235) & ${\lambda}_{125} {\lambda}_{356} - {\lambda}_{135} {\lambda}_{256} + {\lambda}_{156} {\lambda}_{235} = 0$ \\
	213 & (56,1236) & ${\lambda}_{126} {\lambda}_{356} - {\lambda}_{136} {\lambda}_{256} + {\lambda}_{156} {\lambda}_{236} = 0$ \\
	214 & (56,1245) & ${\lambda}_{125} {\lambda}_{456} - {\lambda}_{145} {\lambda}_{256} + {\lambda}_{156} {\lambda}_{245} = 0$ \\
	215 & (56,1246) & ${\lambda}_{126} {\lambda}_{456} - {\lambda}_{146} {\lambda}_{256} + {\lambda}_{156} {\lambda}_{246} = 0$ \\
	216 & (56,1256) & $0 = 0$ \\
	217 & (56,1345) & ${\lambda}_{135} {\lambda}_{456} - {\lambda}_{145} {\lambda}_{356} + {\lambda}_{156} {\lambda}_{345} = 0$ \\
	218 & (56,1346) & ${\lambda}_{136} {\lambda}_{456} - {\lambda}_{146} {\lambda}_{356} + {\lambda}_{156} {\lambda}_{346} = 0$ \\
	219 & (56,1356) & $0 = 0$ \\
	220 & (56,1456) & $0 = 0$ \\
	221 & (56,2345) & ${\lambda}_{235} {\lambda}_{456} - {\lambda}_{245} {\lambda}_{356} + {\lambda}_{256} {\lambda}_{345} = 0$ \\
	222 & (56,2346) & ${\lambda}_{236} {\lambda}_{456} - {\lambda}_{246} {\lambda}_{356} + {\lambda}_{256} {\lambda}_{346} = 0$ \\
	223 & (56,2356) & $0 = 0$ \\
	224 & (56,2456) & $0 = 0$ \\
	225 & (56,3456) & $0 = 0$ \\
\end{longtable}

\newpage

\begin{longtable}{rl}
	\caption{Plücker eqs.\ for (n,p) = (6,3), minus trivial or repeated ones} \label{tb:old_reduced} \\ 
	\# & Equation \\
	\hline
	1 & ${\lambda}_{123} {\lambda}_{145} - {\lambda}_{124} {\lambda}_{135} + {\lambda}_{125} {\lambda}_{134} = 0$ \\
	2 & ${\lambda}_{123} {\lambda}_{146} - {\lambda}_{124} {\lambda}_{136} + {\lambda}_{126} {\lambda}_{134} = 0$ \\
	3 & ${\lambda}_{123} {\lambda}_{156} - {\lambda}_{125} {\lambda}_{136} + {\lambda}_{126} {\lambda}_{135} = 0$ \\
	4 & ${\lambda}_{123} {\lambda}_{245} - {\lambda}_{124} {\lambda}_{235} + {\lambda}_{125} {\lambda}_{234} = 0$ \\
	5 & ${\lambda}_{123} {\lambda}_{246} - {\lambda}_{124} {\lambda}_{236} + {\lambda}_{126} {\lambda}_{234} = 0$ \\
	6 & ${\lambda}_{123} {\lambda}_{256} - {\lambda}_{125} {\lambda}_{236} + {\lambda}_{126} {\lambda}_{235} = 0$ \\
	7 & ${\lambda}_{123} {\lambda}_{345} - {\lambda}_{134} {\lambda}_{235} + {\lambda}_{135} {\lambda}_{234} = 0$ \\
	8 & ${\lambda}_{123} {\lambda}_{346} - {\lambda}_{134} {\lambda}_{236} + {\lambda}_{136} {\lambda}_{234} = 0$ \\
	9 & ${\lambda}_{123} {\lambda}_{356} - {\lambda}_{135} {\lambda}_{236} + {\lambda}_{136} {\lambda}_{235} = 0$ \\
	10 & ${\lambda}_{123} {\lambda}_{456} + {\lambda}_{125} {\lambda}_{346} - {\lambda}_{135} {\lambda}_{246} + {\lambda}_{146} {\lambda}_{235} = 0$ \\
	11 & ${\lambda}_{123} {\lambda}_{456} + {\lambda}_{134} {\lambda}_{256} - {\lambda}_{135} {\lambda}_{246} + {\lambda}_{136} {\lambda}_{245} = 0$ \\
	12 & ${\lambda}_{123} {\lambda}_{456} - {\lambda}_{124} {\lambda}_{356} + {\lambda}_{125} {\lambda}_{346} - {\lambda}_{126} {\lambda}_{345} = 0$ \\
	13 & ${\lambda}_{123} {\lambda}_{456} - {\lambda}_{124} {\lambda}_{356} + {\lambda}_{134} {\lambda}_{256} - {\lambda}_{156} {\lambda}_{234} = 0$ \\
	14 & ${\lambda}_{123} {\lambda}_{456} - {\lambda}_{126} {\lambda}_{345} + {\lambda}_{136} {\lambda}_{245} - {\lambda}_{145} {\lambda}_{236} = 0$ \\
	15 & ${\lambda}_{123} {\lambda}_{456} - {\lambda}_{145} {\lambda}_{236} + {\lambda}_{146} {\lambda}_{235} - {\lambda}_{156} {\lambda}_{234} = 0$ \\
	16 & ${\lambda}_{124} {\lambda}_{156} - {\lambda}_{125} {\lambda}_{146} + {\lambda}_{126} {\lambda}_{145} = 0$ \\
	17 & ${\lambda}_{124} {\lambda}_{256} - {\lambda}_{125} {\lambda}_{246} + {\lambda}_{126} {\lambda}_{245} = 0$ \\
	18 & ${\lambda}_{124} {\lambda}_{345} - {\lambda}_{134} {\lambda}_{245} + {\lambda}_{145} {\lambda}_{234} = 0$ \\
	19 & ${\lambda}_{124} {\lambda}_{346} - {\lambda}_{134} {\lambda}_{246} + {\lambda}_{146} {\lambda}_{234} = 0$ \\
	20 & ${\lambda}_{124} {\lambda}_{356} + {\lambda}_{126} {\lambda}_{345} - {\lambda}_{135} {\lambda}_{246} + {\lambda}_{146} {\lambda}_{235} = 0$ \\
	21 & ${\lambda}_{124} {\lambda}_{356} - {\lambda}_{125} {\lambda}_{346} + {\lambda}_{136} {\lambda}_{245} - {\lambda}_{145} {\lambda}_{236} = 0$ \\
	22 & ${\lambda}_{124} {\lambda}_{356} - {\lambda}_{134} {\lambda}_{256} - {\lambda}_{145} {\lambda}_{236} + {\lambda}_{146} {\lambda}_{235} = 0$ \\
	23 & ${\lambda}_{124} {\lambda}_{356} - {\lambda}_{135} {\lambda}_{246} + {\lambda}_{136} {\lambda}_{245} + {\lambda}_{156} {\lambda}_{234} = 0$ \\
	24 & ${\lambda}_{124} {\lambda}_{456} - {\lambda}_{145} {\lambda}_{246} + {\lambda}_{146} {\lambda}_{245} = 0$ \\
	25 & ${\lambda}_{125} {\lambda}_{345} - {\lambda}_{135} {\lambda}_{245} + {\lambda}_{145} {\lambda}_{235} = 0$ \\
	26 & ${\lambda}_{125} {\lambda}_{346} - {\lambda}_{126} {\lambda}_{345} - {\lambda}_{134} {\lambda}_{256} + {\lambda}_{156} {\lambda}_{234} = 0$ \\
	27 & ${\lambda}_{125} {\lambda}_{346} - {\lambda}_{134} {\lambda}_{256} - {\lambda}_{136} {\lambda}_{245} + {\lambda}_{146} {\lambda}_{235} = 0$ \\
	28 & ${\lambda}_{125} {\lambda}_{346} - {\lambda}_{135} {\lambda}_{246} + {\lambda}_{145} {\lambda}_{236} + {\lambda}_{156} {\lambda}_{234} = 0$ \\
	29 & ${\lambda}_{125} {\lambda}_{356} - {\lambda}_{135} {\lambda}_{256} + {\lambda}_{156} {\lambda}_{235} = 0$ \\
	30 & ${\lambda}_{125} {\lambda}_{456} - {\lambda}_{145} {\lambda}_{256} + {\lambda}_{156} {\lambda}_{245} = 0$ \\
	31 & ${\lambda}_{126} {\lambda}_{345} + {\lambda}_{134} {\lambda}_{256} - {\lambda}_{135} {\lambda}_{246} + {\lambda}_{145} {\lambda}_{236} = 0$ \\
	32 & ${\lambda}_{126} {\lambda}_{345} - {\lambda}_{136} {\lambda}_{245} + {\lambda}_{146} {\lambda}_{235} - {\lambda}_{156} {\lambda}_{234} = 0$ \\
	33 & ${\lambda}_{126} {\lambda}_{346} - {\lambda}_{136} {\lambda}_{246} + {\lambda}_{146} {\lambda}_{236} = 0$ \\
	34 & ${\lambda}_{126} {\lambda}_{356} - {\lambda}_{136} {\lambda}_{256} + {\lambda}_{156} {\lambda}_{236} = 0$ \\
	35 & ${\lambda}_{126} {\lambda}_{456} - {\lambda}_{146} {\lambda}_{256} + {\lambda}_{156} {\lambda}_{246} = 0$ \\
	36 & ${\lambda}_{134} {\lambda}_{156} - {\lambda}_{135} {\lambda}_{146} + {\lambda}_{136} {\lambda}_{145} = 0$ \\
	37 & ${\lambda}_{134} {\lambda}_{356} - {\lambda}_{135} {\lambda}_{346} + {\lambda}_{136} {\lambda}_{345} = 0$ \\
	38 & ${\lambda}_{134} {\lambda}_{456} - {\lambda}_{145} {\lambda}_{346} + {\lambda}_{146} {\lambda}_{345} = 0$ \\
	39 & ${\lambda}_{135} {\lambda}_{456} - {\lambda}_{145} {\lambda}_{356} + {\lambda}_{156} {\lambda}_{345} = 0$ \\
	40 & ${\lambda}_{136} {\lambda}_{456} - {\lambda}_{146} {\lambda}_{356} + {\lambda}_{156} {\lambda}_{346} = 0$ \\
	41 & ${\lambda}_{234} {\lambda}_{256} - {\lambda}_{235} {\lambda}_{246} + {\lambda}_{236} {\lambda}_{245} = 0$ \\
	42 & ${\lambda}_{234} {\lambda}_{356} - {\lambda}_{235} {\lambda}_{346} + {\lambda}_{236} {\lambda}_{345} = 0$ \\
	43 & ${\lambda}_{234} {\lambda}_{456} - {\lambda}_{245} {\lambda}_{346} + {\lambda}_{246} {\lambda}_{345} = 0$ \\
	44 & ${\lambda}_{235} {\lambda}_{456} - {\lambda}_{245} {\lambda}_{356} + {\lambda}_{256} {\lambda}_{345} = 0$ \\
	45 & ${\lambda}_{236} {\lambda}_{456} - {\lambda}_{246} {\lambda}_{356} + {\lambda}_{256} {\lambda}_{346} = 0$ \\
\end{longtable} 

\newpage

\begin{longtable}{rlp{8cm}}
	\caption{New Plücker-like equations for (n,p) = (6,3)} \label{tb:new} \\ 
	\# & $(\jj,\kk)$ & Equation \\
	\hline
	1 & (1,12345) & ${\lambda}_{123} {\lambda}_{145} - {\lambda}_{124} {\lambda}_{135} + {\lambda}_{125} {\lambda}_{134} = 0$ \\
	2 & (1,12346) & ${\lambda}_{123} {\lambda}_{146} - {\lambda}_{124} {\lambda}_{136} + {\lambda}_{126} {\lambda}_{134} = 0$ \\
	3 & (1,12356) & ${\lambda}_{123} {\lambda}_{156} - {\lambda}_{125} {\lambda}_{136} + {\lambda}_{126} {\lambda}_{135} = 0$ \\
	4 & (1,12456) & ${\lambda}_{124} {\lambda}_{156} - {\lambda}_{125} {\lambda}_{146} + {\lambda}_{126} {\lambda}_{145} = 0$ \\
	5 & (1,13456) & ${\lambda}_{134} {\lambda}_{156} - {\lambda}_{135} {\lambda}_{146} + {\lambda}_{136} {\lambda}_{145} = 0$ \\
	6 & (1,23456) & ${\lambda}_{123} {\lambda}_{456} - {\lambda}_{124} {\lambda}_{356} + {\lambda}_{125} {\lambda}_{346} - {\lambda}_{126} {\lambda}_{345} + {\lambda}_{134} {\lambda}_{256} - {\lambda}_{135} {\lambda}_{246} + {\lambda}_{136} {\lambda}_{245} + {\lambda}_{145} {\lambda}_{236} - {\lambda}_{146} {\lambda}_{235} + {\lambda}_{156} {\lambda}_{234} = 0$ \\
	7 & (2,12345) & ${\lambda}_{123} {\lambda}_{245} - {\lambda}_{124} {\lambda}_{235} + {\lambda}_{125} {\lambda}_{234} = 0$ \\
	8 & (2,12346) & ${\lambda}_{123} {\lambda}_{246} - {\lambda}_{124} {\lambda}_{236} + {\lambda}_{126} {\lambda}_{234} = 0$ \\
	9 & (2,12356) & ${\lambda}_{123} {\lambda}_{256} - {\lambda}_{125} {\lambda}_{236} + {\lambda}_{126} {\lambda}_{235} = 0$ \\
	10 & (2,12456) & ${\lambda}_{124} {\lambda}_{256} - {\lambda}_{125} {\lambda}_{246} + {\lambda}_{126} {\lambda}_{245} = 0$ \\
	11 & (2,13456) & ${\lambda}_{123} {\lambda}_{456} - {\lambda}_{124} {\lambda}_{356} + {\lambda}_{125} {\lambda}_{346} - {\lambda}_{126} {\lambda}_{345} - {\lambda}_{134} {\lambda}_{256} + {\lambda}_{135} {\lambda}_{246} - {\lambda}_{136} {\lambda}_{245} - {\lambda}_{145} {\lambda}_{236} + {\lambda}_{146} {\lambda}_{235} - {\lambda}_{156} {\lambda}_{234} = 0$ \\
	12 & (2,23456) & ${\lambda}_{234} {\lambda}_{256} - {\lambda}_{235} {\lambda}_{246} + {\lambda}_{236} {\lambda}_{245} = 0$ \\
	13 & (3,12345) & ${\lambda}_{123} {\lambda}_{345} - {\lambda}_{134} {\lambda}_{235} + {\lambda}_{135} {\lambda}_{234} = 0$ \\
	14 & (3,12346) & ${\lambda}_{123} {\lambda}_{346} - {\lambda}_{134} {\lambda}_{236} + {\lambda}_{136} {\lambda}_{234} = 0$ \\
	15 & (3,12356) & ${\lambda}_{123} {\lambda}_{356} - {\lambda}_{135} {\lambda}_{236} + {\lambda}_{136} {\lambda}_{235} = 0$ \\
	16 & (3,12456) & ${\lambda}_{123} {\lambda}_{456} + {\lambda}_{124} {\lambda}_{356} - {\lambda}_{125} {\lambda}_{346} + {\lambda}_{126} {\lambda}_{345} + {\lambda}_{134} {\lambda}_{256} - {\lambda}_{135} {\lambda}_{246} + {\lambda}_{136} {\lambda}_{245} - {\lambda}_{145} {\lambda}_{236} + {\lambda}_{146} {\lambda}_{235} - {\lambda}_{156} {\lambda}_{234} = 0$ \\
	17 & (3,13456) & ${\lambda}_{134} {\lambda}_{356} - {\lambda}_{135} {\lambda}_{346} + {\lambda}_{136} {\lambda}_{345} = 0$ \\
	18 & (3,23456) & ${\lambda}_{234} {\lambda}_{356} - {\lambda}_{235} {\lambda}_{346} + {\lambda}_{236} {\lambda}_{345} = 0$ \\
	19 & (4,12345) & ${\lambda}_{124} {\lambda}_{345} - {\lambda}_{134} {\lambda}_{245} + {\lambda}_{145} {\lambda}_{234} = 0$ \\
	20 & (4,12346) & ${\lambda}_{124} {\lambda}_{346} - {\lambda}_{134} {\lambda}_{246} + {\lambda}_{146} {\lambda}_{234} = 0$ \\
	21 & (4,12356) & ${\lambda}_{123} {\lambda}_{456} + {\lambda}_{124} {\lambda}_{356} + {\lambda}_{125} {\lambda}_{346} - {\lambda}_{126} {\lambda}_{345} - {\lambda}_{134} {\lambda}_{256} - {\lambda}_{135} {\lambda}_{246} + {\lambda}_{136} {\lambda}_{245} - {\lambda}_{145} {\lambda}_{236} + {\lambda}_{146} {\lambda}_{235} + {\lambda}_{156} {\lambda}_{234} = 0$ \\
	22 & (4,12456) & ${\lambda}_{124} {\lambda}_{456} - {\lambda}_{145} {\lambda}_{246} + {\lambda}_{146} {\lambda}_{245} = 0$ \\
	23 & (4,13456) & ${\lambda}_{134} {\lambda}_{456} - {\lambda}_{145} {\lambda}_{346} + {\lambda}_{146} {\lambda}_{345} = 0$ \\
	24 & (4,23456) & ${\lambda}_{234} {\lambda}_{456} - {\lambda}_{245} {\lambda}_{346} + {\lambda}_{246} {\lambda}_{345} = 0$ \\
	25 & (5,12345) & ${\lambda}_{125} {\lambda}_{345} - {\lambda}_{135} {\lambda}_{245} + {\lambda}_{145} {\lambda}_{235} = 0$ \\
	26 & (5,12346) & ${\lambda}_{123} {\lambda}_{456} - {\lambda}_{124} {\lambda}_{356} - {\lambda}_{125} {\lambda}_{346} - {\lambda}_{126} {\lambda}_{345} + {\lambda}_{134} {\lambda}_{256} + {\lambda}_{135} {\lambda}_{246} + {\lambda}_{136} {\lambda}_{245} - {\lambda}_{145} {\lambda}_{236} - {\lambda}_{146} {\lambda}_{235} - {\lambda}_{156} {\lambda}_{234} = 0$ \\
	27 & (5,12356) & ${\lambda}_{125} {\lambda}_{356} - {\lambda}_{135} {\lambda}_{256} + {\lambda}_{156} {\lambda}_{235} = 0$ \\
	28 & (5,12456) & ${\lambda}_{125} {\lambda}_{456} - {\lambda}_{145} {\lambda}_{256} + {\lambda}_{156} {\lambda}_{245} = 0$ \\
	29 & (5,13456) & ${\lambda}_{135} {\lambda}_{456} - {\lambda}_{145} {\lambda}_{356} + {\lambda}_{156} {\lambda}_{345} = 0$ \\
	30 & (5,23456) & ${\lambda}_{235} {\lambda}_{456} - {\lambda}_{245} {\lambda}_{356} + {\lambda}_{256} {\lambda}_{345} = 0$ \\
	31 & (6,12345) & ${\lambda}_{123} {\lambda}_{456} - {\lambda}_{124} {\lambda}_{356} + {\lambda}_{125} {\lambda}_{346} + {\lambda}_{126} {\lambda}_{345} + {\lambda}_{134} {\lambda}_{256} - {\lambda}_{135} {\lambda}_{246} - {\lambda}_{136} {\lambda}_{245} + {\lambda}_{145} {\lambda}_{236} + {\lambda}_{146} {\lambda}_{235} - {\lambda}_{156} {\lambda}_{234} = 0$ \\
	32 & (6,12346) & ${\lambda}_{126} {\lambda}_{346} - {\lambda}_{136} {\lambda}_{246} + {\lambda}_{146} {\lambda}_{236} = 0$ \\
	33 & (6,12356) & ${\lambda}_{126} {\lambda}_{356} - {\lambda}_{136} {\lambda}_{256} + {\lambda}_{156} {\lambda}_{236} = 0$ \\
	34 & (6,12456) & ${\lambda}_{126} {\lambda}_{456} - {\lambda}_{146} {\lambda}_{256} + {\lambda}_{156} {\lambda}_{246} = 0$ \\
	35 & (6,13456) & ${\lambda}_{136} {\lambda}_{456} - {\lambda}_{146} {\lambda}_{356} + {\lambda}_{156} {\lambda}_{346} = 0$ \\
	36 & (6,23456) & ${\lambda}_{236} {\lambda}_{456} - {\lambda}_{246} {\lambda}_{356} + {\lambda}_{256} {\lambda}_{346} = 0$ \\
	%\end{xtabular}
\end{longtable}

\end{document}